\begin{document}

\theoremstyle{plain}
\newtheorem{Thm}{Theorem}
\newtheorem{Cor}{Corollary}
\newtheorem{Con}{Conjecture}
\newtheorem{Main}{Main Theorem}
\newtheorem{Exmp}{Example}
\newtheorem{Lem}{Lemma}
\newtheorem{Prop}{Proposition}
\newtheorem{proof}{Proof}

\theoremstyle{Def}
\newtheorem{Def}{Definition}
\newtheorem{Note}{Note}

\theoremstyle{remark}
\newtheorem{notation}{Notation}
\renewcommand{\thenotation}{}

\errorcontextlines=0
\numberwithin{equation}{section}
\renewcommand{\rm}{\normalshape}%

\title[A new variation on statistical ward  continuity]%
   {A new variation on statistical ward  continuity}
\author[Huseyin Cakalli, Maltepe University, Istanbul-Turkey]{Huseyin Cakalli\\
          Maltepe University, Graduate School of Science and Engineering, Department of Mathematics, Maltepe, Istanbul-Turkey}
\address{Huseyin Cakalli\
          Maltepe University, Graduate School of Science and Engineering, Department of Mathematics, Marmara E\u{g}\.{I}t\.{I}m K\"oy\"u, TR 34857, Maltepe, \.{I}stanbul-Turkey Phone:(+90216)6261050 ext:2311, fax:(+90216)6261113
}
\email{hcakalli@@maltepe.edu.tr; hcakalli@@gmail.com}

\keywords{Sequences, series, summability}
\subjclass[2010]{Primary: 40A05; Secondaries: 40D25; 40A35; 40A30; 26A15}
\date{\today}

\begin{abstract}
A real valued function defined on a subset $E$ of $\mathbb{R}$, the set of real numbers, is $\rho$-statistically downward   continuous if it preserves $\rho$-statistical downward  quasi-Cauchy sequences of points in $E$, where a sequence $(\alpha_{k})$ of real numbers is called ${\rho}$-statistically downward quasi-Cauchy if $\lim_{n\rightarrow\infty}\frac{1}{\rho_{n} }|\{k\leq n: \Delta \alpha_{k} \geq \varepsilon\}|=0 $
for every $\varepsilon>0$, in which $(\rho_{n})$ is a non-decreasing sequence of positive real numbers tending to $\infty$ such that $\limsup _{n} \frac{\rho_{n}}{n}<\infty $, $\Delta \rho_{n}=O(1)$, and $\Delta \alpha _{k} =\alpha _{k+1} - \alpha _{k}$ for each positive integer $k$. It turns out that a function is uniformly continuous if it is $\rho$-statistical downward   continuous on an above   bounded set.
\end{abstract}

\maketitle

\section{Introduction}
\normalfont

The concept of continuity and any concept involving continuity play a very important role not only in pure mathematics but also in other branches of sciences involving mathematics especially in computer science, information theory, economics, and biological science.

The notion of strongly lacunary convergence or $N_\theta$ convergence was introduced, and studied by Freedman, Sember, and M. Raphael in \cite{FreedmanandSemberandRaphaelSomecesarotypesummabilityspaces} in the sense that a sequence $(\alpha_{k})$ of points in $\mathbb{R}$ is strongly lacunary convergent or $N_\theta$ convergent to an $L\in{\mathbb{R}}$, which is denoted by $N_{\theta}-\lim \alpha_{k}=L$,  if $\lim_{r\rightarrow\infty}\frac{1}{h_{r}}\sum^{}_{k\in{I_{r}}}|\alpha_{k}-L|=0$, where $I_{r}=(k_{r-1},k_{r}]$, and $k_{0}\neq 0$, $h_{r}:=k_{r}-k_{r-1}\rightarrow \infty$ as $r\rightarrow\infty$ and $\theta=(k_{r})$ is an increasing sequence of positive integers (see also \cite{KaplanandCakalliVariationsonstronglylacunaryquasiCauchysequencesAIP}, and \cite{KaplanandCakalliVariationsonstronglacunaryquasiCauchysequencesJNonlinearSciAppl}).
In \cite{FridyandOrhanLacunarystatisconvergence} Fridy and Orhan introduced the concept of lacunary statistically convergence in the sense that a  sequence $(\alpha_{k})$ of points in $\textbf{R}$ is called lacunary statistically convergent, or $S_\theta$-convergent, to an element $L$ of $\textbf{R}$ if
$\lim_{r\rightarrow\infty}\frac{1}{h_{r}}|\{k\in I_{r}: |\alpha_{k}-L|
\geq\varepsilon\}|=0$ for every positive real number $\varepsilon$ where
$I_{r}=(k_{r-1},k_{r}]$ and $k_{0}=0$,
$h_{r}:k_{r}-k_{r-1}\rightarrow \infty$ as $r\rightarrow\infty$
and $\theta=(k_{r})$ is an increasing sequence of positive integers (see also  \cite{FridyandOrhanLacunarystatissummability}, \cite{EsiAsymptoticallydoublelacunryequivalentsequencesdefinedbyOrliczfunctions}, \cite    {EsiandAcikgozMehmetOnalmostlambdastatisticalconvergenceoffuzzynumbers}, \cite{CakalliandKaplanAvariationonlacunarystatisticalquasiCauchysequencesCommunications},  \cite{EsiStatisticalandlacunarystatisticalconvergenceofintervalnumbersintopologicalgroups}, and  \cite{CakalliLacunarystatisticalconvergenceintopgroups}).

Modifying the definitions of a forward Cauchy sequence introduced in \cite{ReillyandSubrahmanyamandVamanamurthyCauchysequencesinquasipseudometricspaces} and  \cite{CollinsandZimmerAnasymmetricArzelaAscolitheorem}, Palladino (\cite{PalladinoOnhalfcauchysequences}) gave the concept of downward half Cauchyness in the following way: a real sequence $(\alpha_{k})$ is called downward half Cauchy if for every $\varepsilon>0$ there exists an $n_{0}\in{\mathbb{N}}$ so that $\alpha_{k}-\alpha_{m} <\varepsilon$ for $m \geq n \geq n_0$ (see also \cite{ReillyandSubrahmanyamandVamanamurthyCauchysequencesinquasipseudometricspaces}). A sequence $(\alpha_{k})$ of points in $\mathbb{R}$, the set of real numbers, is called statistically downward quasi-Cauchy if $\lim_{n\rightarrow\infty}\frac{1}{n}|\{k\leq n: \alpha_{k+1}-\alpha_{k}\geq \varepsilon\}|=0$ for every $\varepsilon>0$ (\cite{CakalliUpwardanddownwardstatisticalcontinuities}).

Recently, many kinds of continuities were introduced and investigated, not all but some of them we recall in the following:
slowly oscillating continuity (\cite{CakalliSlowlyoscillatingcontinuity}, \cite{VallinCreatingslowlyoscillatingsequencesandslowlyoscillatingcontinuousfunctions}), quasi-slowly oscillating continuity (\cite{CanakandDik}), ward continuity (\cite{CakalliForwardcontinuity}, \cite{BurtonandColemanQuasiCauchysequences}), $\delta$-ward continuity (\cite{CakalliDeltaquasiCauchysequences}),  $p$-ward continuity (\cite{CakalliVariationsonquasiCauchysequences}), statistical ward continuity (\cite{CakalliStatisticalwardcontinuity}), $\rho$-statistical ward continuity (\cite{CakalliAvariationonstatisticalwardcontinuity},   lacunary statistical $\delta^{2}$-ward continuity (\cite{YildizIstatistikselboslukludelta2quasiCauchydizileri}), arithmetic continuity (\cite{CakalliAvariationonarithmeticcontinuity}, \cite{CakalliCorrigendumtothepaperentitledAvariationonarithmeticcontinuity})
 Abel continuity (\cite{CakalliandAlbayrakNewtypecontinuitiesviaAbelconvergence}), which enabled some authors to obtain conditions on the domain of a function for some characterizations of uniform continuity in terms of sequences in the sense that a function preserves a certain kind of sequences (see \cite[Theorem 6]{VallinCreatingslowlyoscillatingsequencesandslowlyoscillatingcontinuousfunctions}, \cite[Theorem 1 and Theorem 2]{BurtonandColemanQuasiCauchysequences}, \cite[Theorem 2.3]{CanakandDik}).

The aim of this paper is to introduce, and investigate the concepts of $\rho$-statistical downward continuity and $\rho$-statistical downward compactness.

\section{$\rho$-statistical downward compactness}

The definition of a Cauchy sequence is often misunderstood by the students who first encounter it in an introductory real analysis course. In particular, some fail to understand that it involves far more than that the distance between successive terms is tending to zero. Nevertheless, sequences which satisfy this weaker property are interesting in their own right. In \cite{BurtonandColemanQuasiCauchysequences} the authors call them "quasi-Cauchy", while they  were called "forward convergent to $0$" sequences in \cite{CakalliForwardcontinuity} (see also \cite{CakalliForwardcompactness}), where a sequence $( \alpha_{k} )$ is called quasi-Cauchy if for given any $\varepsilon>0$, there exists an integer $K>0$ such that $k \geq K$ implies that $|\alpha_{k+1}-\alpha_{k} |<\varepsilon$.
A subset $E$ of $\mathbb{R}$ is compact if and only if any sequence in $E$ has a convergent subsequence whose limit is in $E$. Boundedness of a subset $E$ of $\mathbb{R}$ coincides with that any sequence of points in $E$ has either a Cauchy subsequence, or a quasi-Cauchy subsequence.  What is the case for above boundedness?  $\rho$-statistical downward quasi Cauchy sequences provide with the answer.

Weakening the condition on the definition of a $\rho$-statistical quasi-Cauchy sequence, omitting the absolute value symbol, i.e. replacing  $|\alpha_{k+1}-\alpha_{k}|< \varepsilon$ with $\alpha_{k+1}-\alpha_{k} < \varepsilon$ in the definition of a $\rho$-statistical quasi-Cauchy sequence given in \cite{CakalliAvariationonstatisticalwardcontinuity}, we introduce the following definition.

\begin{Def} A sequence $(\alpha_{k})$ of points in $\mathbb{R}$ is called $\rho$-statistically downward quasi-Cauchy if
\[
\lim_{n\rightarrow\infty}\frac{1}{\rho_{n}}|\{k\leq n: \alpha_{k+1}-\alpha_{k}\geq \varepsilon\}|=0
\]
for every $\varepsilon>0$.
\end{Def}
$\Delta \rho ^{-}$ will denote the set of all $\rho$-statistically downward quasi-Cauchy sequences of points in $\mathbb{R}$.
As an example, the sequence $(\rho_{n})$ is a $\rho$-statistically downward quasi-Cauchy sequence.
Any $\rho$-statistically convergent sequence is $\rho$-statistically downward quasi-Cauchy. Any $\rho$-statistically quasi-Cauchy sequence is $\rho$-statistically downward quasi-Cauchy, so any slowly oscillating sequence is $\rho$-statistically downward quasi-Cauchy, so any Cauchy sequence is, so any convergent sequence is. Any downward Cauchy sequence is $\rho$-statistically downward quasi-Cauchy.

Now we give some interesting examples that show importance of the interest.

\begin{Exmp} Let $n$ be a positive integer. In a group of $n$ people, each person selects at random and simultaneously another person of the group. All of the selected persons are then removed from the group, leaving a random number $n_{1} < n$ of people which form a new group. The new group then repeats independently the selection and removal thus described, leaving $n_{2} < n_{1}$ persons, and so forth until either one person remains, or no persons remain. Denote by $\alpha_n$ the probability that, at the end of this iteration initiated with a group of $n$ persons, one person remains. Then the sequence $(\alpha_{1}, \alpha_{2}, · · ·, \alpha_{n},...)$  is a $\rho$-statistically downward quasi-Cauchy sequence (see also \cite{WinklerMathematicalPuzzles}).
\end{Exmp}

\begin{Exmp} In a group of $k$ people, $k$ is a positive integer, each person selects independently and at random one of three subgroups to which to belong, resulting in three groups with random numbers $k_{1}$, $k_{2}$, $k_{3}$ of members; $k_{1} + k_{2} + k_{3} = k$. Each of the subgroups is then partitioned independently in the same manner to form three sub subgroups, and so forth. Subgroups having no members or having only one member are removed from the process. Denote by $\alpha_{k}$ the expected value of the number of iterations up to complete removal, starting initially with a group of $k$ people. Then the sequence $(\alpha_{1}, \frac{\alpha_{2}}{2}, \frac{\alpha_{3}}{3},...,\frac{\alpha_{n}}{n},...)$ is a bounded non-convergent $\rho$-statistically downward quasi-Cauchy sequence (\cite{KeaneUnderstandingErgodicity}).
\end{Exmp}

It is well known that a subset of $\mathbb{R}$ is compact if and only if any sequence of points in $E$ has a convergence subsequence, whose limit is in $E$. By using this idea in the definition of sequential compactness, now we introduce a definition of $\rho$-statistically downward compactness of a subset of $\mathbb{R}$.

\begin{Def}
A subset $E$ of $\mathbb{R}$ is called $\rho$-statistically downward compact if any sequence of points in $E$ has a $\rho$-statistically downward quasi-Cauchy subsequence.
\end{Def}

First, we note that any finite subset of $\mathbb{R}$ is $\rho$-statistically downward compact, the union of finite number of $\rho$-statistically downward compact subsets of $\mathbb{R}$ is $\rho$-statistically downward compact, and the intersection of any family of $\rho$-statistically downward compact subsets of $\mathbb{R}$ is $\rho$-statistically downward compact. Furthermore any subset of a $\rho$-statistically downward compact set is $\rho$-statistically downward compact, any compact subset of $\mathbb{R}$ is $\rho$-statistically downward compact, any bounded subset of $\mathbb{R}$ is $\rho$-statistically downward compact, and any slowly oscillating compact subset of $\mathbb{R}$ is $\rho$-statistically downward compact (see \cite{CakalliSlowlyoscillatingcontinuity} for the definition of slowly oscillating compactness). The sum of finite number of $\rho$-statistically downward compact subsets of $\mathbb{R}$ is $\rho$-statistically downward compact. Any bounded above subset of $\mathbb{R}$ is $\rho$-statistically downward compact. These observations suggest to us giving the following result.

\begin{Thm} \label{TheoremLambdadownwardtatisticalwardcompactiffboundedabove}
A subset of $\mathbb{R}$ is $\rho$-statistically downward compact if and only if it is bounded above.
\end{Thm}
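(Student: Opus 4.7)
The plan is to prove both implications, noting that one direction was essentially flagged in the paragraph preceding the theorem. For the easier direction ($E$ bounded above implies $\rho$-statistically downward compact), I would fix an arbitrary sequence $(\alpha_k)$ in $E$ and split into two cases according to $\liminf_k \alpha_k$. If $\liminf_k \alpha_k > -\infty$, then $(\alpha_k)$ is bounded (being also bounded above by assumption on $E$), so Bolzano--Weierstrass produces a convergent, hence Cauchy, subsequence; any Cauchy sequence trivially satisfies the $\rho$-statistical downward quasi-Cauchy condition. If $\liminf_k \alpha_k = -\infty$, I recursively pick indices $k_1 < k_2 < \cdots$ with $\alpha_{k_{j+1}} < \alpha_{k_j} - 1$, which is possible because the sequence is not bounded below; the resulting subsequence satisfies $\alpha_{k_{j+1}} - \alpha_{k_j} < 0 < \varepsilon$ for every $j$ and every $\varepsilon > 0$, so the set in the definition is empty for all $n$.

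For the forward direction I would argue by contrapositive: assume $E$ is not bounded above, and construct a sequence in $E$ whose every subsequence fails to be $\rho$-statistically downward quasi-Cauchy. Pick $\alpha_1 \in E$ arbitrarily and inductively select $\alpha_{k+1} \in E$ with $\alpha_{k+1} \geq \alpha_k + 1$, which is possible since $E$ has no upper bound. Given any subsequence $\beta_j := \alpha_{k_j}$, the telescoping inequality $\beta_{j+1} - \beta_j = \sum_{i=k_j}^{k_{j+1}-1}(\alpha_{i+1} - \alpha_i) \geq k_{j+1} - k_j \geq 1$ forces every consecutive difference of $(\beta_j)$ to be at least $1$. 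Consequently, for $\varepsilon = 1$, the counting set in the definition is $\{1,2,\ldots,n\}$, so its cardinality divided by $\rho_n$ equals $n/\rho_n$.

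The key step is then to invoke the standing hypothesis $\limsup_n \rho_n/n < \infty$: it yields some $M > 0$ such that $\rho_n \leq Mn$ for all sufficiently large $n$, whence $n/\rho_n \geq 1/M$ does not tend to $0$. Hence $(\beta_j)$ cannot be $\rho$-statistically downward quasi-Cauchy, contradicting $\rho$-statistical downward compactness of $E$.

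The main obstacle, if any, is being sure that the forward direction genuinely uses the assumption $\limsup_n \rho_n / n < \infty$ (otherwise the argument would break when $\rho_n$ grows faster than linearly); the converse direction needs only mild care in the unbounded-below subcase, which is handled by the explicit greedy construction above. Everything else is routine.
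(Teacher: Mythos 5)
Your proposal is correct and follows essentially the same route as the paper: the same case split for the bounded-above direction (a convergent, hence Cauchy, subsequence when the given sequence is bounded below, and a greedy strictly decreasing subsequence with negative increments when it is not) and the same greedy increasing construction for the converse. If anything, your write-up is tighter than the paper's in two places: you split cases on the boundedness of the sequence rather than of $E$ itself, which avoids the gap where $E$ is unbounded below but the particular sequence is bounded, and you make explicit that the converse genuinely needs the standing hypothesis $\limsup_{n} \rho_{n}/n<\infty$ to conclude that $n/\rho_{n}$ does not tend to $0$, a point the paper's proof leaves unsaid.
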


\begin{proof}
Let $E$ be a bounded above subset of $\mathbb{R}$. If $E$ is also bounded below, then it follows from \cite[Lemma 2]{CakalliStatisticalwardcontinuity} and \cite[Theorem 3]{CakalliStatisticalquasiCauchysequences} that any sequence of points in $E$ has a quasi Cauchy subsequence which is also $\rho$-statistically downward quasi-Cauchy. If $E$ is unbounded below, and $(\alpha_{n})$ is an unbounded below sequence of points in $E$, then for $k=1$ we can find an $\alpha_{n_{1}}$ less than $0$. For k=2 we can pick an $\alpha_{n_{2}}$ such that $\alpha_{n_{2}}<-\rho_{2}+\alpha_{n_{1}}$. We can successively find  for each $k\in{\mathbb{N}}$ an $\alpha_{n_{k}}$ such that $\alpha_{n_{k+1}}<-\rho_{k+1}+\alpha_{n_{k}}$.  Then $\alpha_{n_{k+1}}-\alpha_{n_{k}}<-\rho_{k+1}$ for each $k \in {\mathbb{N}}$. Therefore we see that
\[
\lim_{n\rightarrow\infty}\frac{1}{\rho_{n}}|\{k\leq n: \alpha_{n_{k+1}}-\alpha_{n_{k}}\geq \varepsilon\}|=0
\]
for every $\varepsilon>0$. Conversely, suppose that $E$ is not bounded above. Pick an element $\alpha_{1}$ of $E$. Then we can choose an element $\alpha_{2}$ of $E$ such that $\alpha_{2}>\rho_{2}+\alpha_{1}$. Similarly we can choose an element $\alpha_{3}$ of $E$ such that $\alpha_{3}>\rho_{3}+\alpha_{2}$. We can inductively choose  $\alpha_{k}$ satisfying
$\alpha_{k+1}>\rho_{k}+\alpha_{k}$ for each positive integer $k$. Then the sequence $(\alpha_{k})$ does not have any $\rho$-statistically downward quasi-Cauchy subsequence. Thus $E$ is not $\rho$-statistically downward  compact. This contradiction completes the proof.
\end{proof}

It follows from the above theorem that if a closed subset $E$ of $\mathbb{R}$ is $\rho$-statistically downward  compact, and $-A$ is $\rho$-statistically downward  compact, then any sequence of points in $E$ has a $(P_{n} ,s)$-absolutely almost convergent subsequence (see \cite{CakalliandTaylanOnabsolutelyalmostconvergence}, \cite{OzarslanandYildizAnewstudyontheabsolutesummabilityfactorsofFourierseries}, \cite{YildizAnewtheoremonlocalpropertiesoffactoredFourierseries}, \cite{BorOnGeneralizedAbsoluteCesaroSummability}, \cite{YildizOnApplicationofMatrixSummabilitytoFourierSeriesMathMethodsApplSci}, \cite{YildizOnAbsoluteMatrixSummabilityFactorsofInfiniteSeriesandFourierSeries}, and \cite{YildizIstatistikselboslukludelta2quasiCauchydizileri}).

\begin{Cor} A subset of $\mathbb{R}$ is $\rho$-statistically downward compact if and only if it is statistically downward compact.
\end{Cor}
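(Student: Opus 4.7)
The plan is to derive this corollary as an immediate consequence of Theorem \ref{TheoremLambdadownwardtatisticalwardcompactiffboundedabove} by specializing the weight sequence. I would first observe that Theorem \ref{TheoremLambdadownwardtatisticalwardcompactiffboundedabove} characterizes $\rho$-statistical downward compactness as being equivalent to being bounded above, for any $(\rho_{n})$ satisfying the standing hypotheses (non-decreasing, positive, tending to $\infty$, with $\limsup_{n}\rho_{n}/n<\infty$ and $\Delta\rho_{n}=O(1)$).

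Next, I would check that the statistical case fits into this framework with $\rho_{n}=n$. Indeed, $(n)$ is non-decreasing, positive, tends to $\infty$, satisfies $\limsup_{n}n/n=1<\infty$, and $\Delta n=1=O(1)$. With this choice, the definition of a $\rho$-statistically downward quasi-Cauchy sequence reduces verbatim to the statistical downward quasi-Cauchy condition recalled in the introduction, and consequently $\rho$-statistical downward compactness reduces to statistical downward compactness.

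Applying Theorem \ref{TheoremLambdadownwardtatisticalwardcompactiffboundedabove} to the generic $(\rho_{n})$ yields that $E$ is $\rho$-statistically downward compact iff $E$ is bounded above; applying the same theorem to $(\rho_{n})=(n)$ yields that $E$ is statistically downward compact iff $E$ is bounded above. Chaining these two equivalences through the common intermediate condition completes the proof.

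There is no real obstacle here; the only thing to take care of is to make explicit that $(n)$ is an admissible choice of weight sequence, so that the general theorem truly applies to the statistical case. I would not need to revisit the construction of the sequences $(\alpha_{n_{k}})$ from the proof of the theorem, since the biconditional already encapsulates everything needed.
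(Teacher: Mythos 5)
Your proof is correct and follows essentially the same route as the paper: both sides are equated with the common intermediate condition ``bounded above'' via Theorem \ref{TheoremLambdadownwardtatisticalwardcompactiffboundedabove}, and the two equivalences are chained. The only (harmless) difference is that the paper obtains the statistical-case equivalence by citing Theorem 3.3 of \cite{CakalliUpwardanddownwardstatisticalcontinuities}, whereas you re-derive it by checking that $\rho_{n}=n$ is an admissible weight sequence and specializing the theorem --- a self-contained variant that the paper itself endorses in its conclusion as the special case $\rho_{n}=n$.
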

\begin{proof} The proof of this theorem follows from Theorem \ref{TheoremLambdadownwardtatisticalwardcompactiffboundedabove} and \cite[Theorem 3.3]{CakalliUpwardanddownwardstatisticalcontinuities}, so it is omitted.
\end{proof}

\begin{Cor} A subset of $\mathbb{R}$ is $\rho$-statistically downward compact if and only if it is lacunary statistically downward compact.
\end{Cor}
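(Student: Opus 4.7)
The plan is to reduce this corollary to Theorem \ref{TheoremLambdadownwardtatisticalwardcompactiffboundedabove} by showing that lacunary statistical downward compactness is likewise equivalent to being bounded above. Once that equivalence is in hand, the statement follows by chaining it with Theorem \ref{TheoremLambdadownwardtatisticalwardcompactiffboundedabove}, so the corollary really reduces to the single implication: a subset $E\subseteq\mathbb{R}$ is lacunary statistically downward compact if and only if it is bounded above. This allows us to reuse the same constructions as in the proof of Theorem \ref{TheoremLambdadownwardtatisticalwardcompactiffboundedabove}, replacing the role of $\rho_n$ by the lacunary data $h_r$, $I_r$.

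For the direction bounded above $\Rightarrow$ lacunary statistically downward compact, I would split on whether $E$ is bounded below. If $E$ is bounded (both sides), then any sequence in $E$ has a Cauchy subsequence by Bolzano--Weierstrass, and every Cauchy sequence is trivially lacunary statistically downward quasi-Cauchy. If $E$ is unbounded below, I would mimic the inductive extraction from Theorem \ref{TheoremLambdadownwardtatisticalwardcompactiffboundedabove}: given an unbounded-below sequence $(\alpha_n)$ in $E$, pick a subsequence $(\alpha_{n_k})$ with $\alpha_{n_{k+1}}-\alpha_{n_k}<-k$ (or $<-h_r$ for $k\in I_r$). Since all consecutive differences are strictly negative, for every $\varepsilon>0$ the set $\{k\in I_r:\alpha_{n_{k+1}}-\alpha_{n_k}\geq\varepsilon\}$ is empty, hence $\frac{1}{h_r}|\{k\in I_r:\alpha_{n_{k+1}}-\alpha_{n_k}\geq\varepsilon\}|=0$ for every $r$, giving lacunary statistical downward quasi-Cauchyness.

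For the converse direction, suppose $E$ is not bounded above. Pick $\alpha_1\in E$ and inductively choose $\alpha_{k+1}\in E$ with $\alpha_{k+1}>\alpha_k+h_r+1$ whenever $k\in I_r$. Then for every subsequence $(\alpha_{n_j})$ of $(\alpha_k)$ the consecutive differences satisfy $\alpha_{n_{j+1}}-\alpha_{n_j}\geq h_{r(j)}+1\geq 2$ (using $h_r\geq 1$), so for $\varepsilon=1$ the set $\{j\in I_r:\alpha_{n_{j+1}}-\alpha_{n_j}\geq 1\}=I_r$ for every $r$, whence $\frac{1}{h_r}|I_r|=1\not\to 0$. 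Therefore $(\alpha_k)$ admits no lacunary statistically downward quasi-Cauchy subsequence, and $E$ fails to be lacunary statistically downward compact.

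The main obstacle I anticipate is not conceptual but bookkeeping: one has to be sure that the inductive constructions really respect the block structure $I_r=(k_{r-1},k_r]$ of the lacunary sequence $\theta$, so that the $h_r$-averages can indeed be controlled block by block rather than only along initial segments of length $n$ as in the $\rho$-statistical case. Once the indices are managed correctly, both directions follow the template of Theorem \ref{TheoremLambdadownwardtatisticalwardcompactiffboundedabove} verbatim, and the corollary is immediate.
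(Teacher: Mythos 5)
Your overall strategy coincides with the paper's: both reduce the corollary to Theorem \ref{TheoremLambdadownwardtatisticalwardcompactiffboundedabove} by establishing that lacunary statistical downward compactness is also equivalent to being bounded above, and then chaining the two equivalences. The difference is that the paper simply cites this lacunary characterization from the literature (Theorem 1.9 of \cite{CakalliandMucukLacunarystatisticallydownwardanddownwardhalfquasiCauchysequences}) and omits all details, whereas you supply a self-contained proof by transplanting the two constructions from the proof of Theorem \ref{TheoremLambdadownwardtatisticalwardcompactiffboundedabove} into the block structure $I_r=(k_{r-1},k_r]$. Your version is sound: in the forward direction the extracted subsequence has all consecutive differences negative, so the sets $\{k\in I_r:\alpha_{n_{k+1}}-\alpha_{n_k}\geq\varepsilon\}$ are empty for every $\varepsilon>0$; in the converse direction any subsequence of your increasing sequence inherits gaps at least $2$, so the $h_r$-density is identically $1$ (in fact the simpler choice $\alpha_{k+1}>\alpha_k+1$ already suffices, so the $h_r$-dependent spacing is unnecessary). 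What your route buys is independence from the external reference; what the paper's route buys is brevity. One small point to tidy, which you inherit from the paper's own proof of Theorem \ref{TheoremLambdadownwardtatisticalwardcompactiffboundedabove}: the case split should be on whether the given \emph{sequence} is bounded below, not on whether $E$ is, since an unbounded-below set can contain bounded sequences; those are of course handled by the Bolzano--Weierstrass case, but the argument should say so explicitly.
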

\begin{proof} The proof of this theorem follows from Theorem \ref{TheoremLambdadownwardtatisticalwardcompactiffboundedabove} and \cite[Theorem 1.9]{CakalliandMucukLacunarystatisticallydownwardanddownwardhalfquasiCauchysequences}, so is omitted.

\end{proof}

\begin{Cor} A subset of $\mathbb{R}$ is $\rho$-statistically downward compact if and only if it is downward compact.
\end{Cor}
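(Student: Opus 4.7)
The plan is to reproduce the pattern of the two preceding corollaries: invoke Theorem \ref{TheoremLambdadownwardtatisticalwardcompactiffboundedabove} to replace $\rho$-statistical downward compactness by the simpler geometric condition of being bounded above, and then match this against the analogous characterization of ordinary downward compactness. So the whole task reduces to showing that a subset $E\subseteq\mathbb{R}$ is downward compact (every sequence in $E$ has a downward quasi-Cauchy subsequence, in the sense obtained from Palladino's downward half Cauchyness by restricting to successive terms) if and only if $E$ is bounded above.

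For the easier direction, assume $E$ is bounded above and let $(\alpha_n)$ be an arbitrary sequence in $E$. If $(\alpha_n)$ is also bounded below, Bolzano–Weierstrass yields a convergent, hence Cauchy, subsequence, which is automatically downward quasi-Cauchy. If $(\alpha_n)$ is unbounded below, one thins it exactly as in the proof of Theorem \ref{TheoremLambdadownwardtatisticalwardcompactiffboundedabove}: pick $\alpha_{n_1}<0$, then recursively choose $\alpha_{n_{k+1}}<\alpha_{n_k}-k$, so that $\alpha_{n_{k+1}}-\alpha_{n_k}<-k$ for every $k$; since all successive differences are negative (and in fact tend to $-\infty$), the set $\{k:\alpha_{n_{k+1}}-\alpha_{n_k}\geq\varepsilon\}$ is empty for every $\varepsilon>0$, and the subsequence is downward quasi-Cauchy.

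For the converse, suppose $E$ is not bounded above. Mimicking the contrapositive part of Theorem \ref{TheoremLambdadownwardtatisticalwardcompactiffboundedabove}, pick $\alpha_1\in E$ arbitrarily and recursively choose $\alpha_{k+1}\in E$ with $\alpha_{k+1}>\alpha_k+1$. Then for any subsequence $(\alpha_{n_j})$ the successive differences $\alpha_{n_{j+1}}-\alpha_{n_j}$ exceed $1$ for every $j$, so the downward quasi-Cauchy condition fails outright with $\varepsilon=1$. Hence $E$ is not downward compact, completing the auxiliary equivalence and therefore the corollary.

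The only genuine point of attention is the definition of downward quasi-Cauchy to be used here, since the excerpt cites Palladino's downward half Cauchyness rather than a named ``downward quasi-Cauchy'' notion; once this is pinned down as the natural $\Delta\alpha_k<\varepsilon$ eventually-type condition, both directions are essentially free and parallel the argument of the main theorem. Should a direct reference for ``downward compact iff bounded above'' already exist in the author's bibliography, the entire proof collapses to a one-line citation combined with Theorem \ref{TheoremLambdadownwardtatisticalwardcompactiffboundedabove}, exactly as in the two preceding corollaries, and that is almost certainly the form the final write-up will take.
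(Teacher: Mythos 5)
Your proposal is correct and follows the paper's approach exactly: the paper's proof is a one-line reduction via Theorem \ref{TheoremLambdadownwardtatisticalwardcompactiffboundedabove} together with a citation of the external result that downward compactness is equivalent to being bounded above (Theorem 2.8 of \emph{Beyond Cauchy and quasi Cauchy sequences}), which is precisely the collapse you predicted in your final paragraph. Your inline proof of that auxiliary equivalence is sound and simply makes explicit what the paper delegates to the citation.
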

\begin{proof} The proof of this theorem follows from Theorem \ref{TheoremLambdadownwardtatisticalwardcompactiffboundedabove} and \cite[Theorem 2.8 ]{CakalliBeyondCauchyandquasiCauchysequences}, so is omitted.

\end{proof}

\begin{Cor} A subset $A$ of $\mathbb{R}$ is bounded if and only if the sets $A$ and $-A$ are $\rho$-statistically downward compact.
\end{Cor}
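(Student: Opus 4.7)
The plan is to reduce the statement to a double application of Theorem \ref{TheoremLambdadownwardtatisticalwardcompactiffboundedabove}, using the elementary observation that $A$ is bounded below in $\mathbb{R}$ if and only if $-A$ is bounded above.

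First I would recall from Theorem \ref{TheoremLambdadownwardtatisticalwardcompactiffboundedabove} that a subset of $\mathbb{R}$ is $\rho$-statistically downward compact precisely when it is bounded above. Applying this to $A$ gives the equivalence ``$A$ is $\rho$-statistically downward compact $\Longleftrightarrow$ $A$ is bounded above''. Applying the same theorem to the set $-A$ gives ``$-A$ is $\rho$-statistically downward compact $\Longleftrightarrow$ $-A$ is bounded above $\Longleftrightarrow$ $A$ is bounded below'', where the last equivalence is the obvious reflection $\sup(-A)=-\inf A$.

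Combining the two equivalences, the simultaneous $\rho$-statistical downward compactness of $A$ and $-A$ is equivalent to $A$ being both bounded above and bounded below, i.e.\ bounded. This completes the proof in both directions at once.

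There is no genuine obstacle here; the content is entirely in Theorem \ref{TheoremLambdadownwardtatisticalwardcompactiffboundedabove}. The only point worth flagging is to state explicitly that the map $x\mapsto -x$ sends upper bounds to lower bounds, so that the hypothesis on $-A$ translates cleanly to a lower bound condition on $A$.
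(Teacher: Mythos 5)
Your proof is correct and follows exactly the route the paper takes: apply Theorem \ref{TheoremLambdadownwardtatisticalwardcompactiffboundedabove} to both $A$ and $-A$, combined with the observation that $A$ is bounded if and only if $A$ and $-A$ are both bounded above. The paper simply states this fact and omits the details, so your write-up is just a slightly more explicit version of the same argument.
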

\begin{proof} The proof of this theorem follows from the fact that a subset $A$ of $\mathbb{R}$ is bounded if and only if the sets $A$ and $-A$ are bounded above, so is omitted.
\end{proof}

\section{$\rho$-statistical downward continuity}

A real valued function $f$ defined on a subset of $\mathbb{R}$ is $\rho$-statistically continuous, or $S_{\rho}$-continuous if for each point $\ell$ in the domain, $S_{\rho}-\lim_{n\rightarrow\infty}f(\alpha_{k})=f(\ell)$ whenever $S_{\rho}-\lim_{n\rightarrow\infty}\alpha_{k}=\ell$ (\cite{CakalliAvariationonstatisticalwardcontinuity}). This is equivalent to the statement that $(f(\alpha_{k}))$ is a convergent sequence whenever $(\alpha_{k})$ is. This is also equivalent to the statement that $(f(\alpha_{k}))$ is a Cauchy sequence whenever $(\alpha_{k})$ is Cauchy provided that the domain of the function is complete. These known results for $\rho$-statistically-continuity and continuity for real functions in terms of sequences might suggest to us introducing a new type of continuity, namely, $\rho$-statistically-downward continuity, weakening the condition on the definition of a $\rho$-statistically ward continuity, omitting the absolute value symbol, i.e. replacing "$|\alpha_{k+1}- \alpha_{k}|$" with "$\alpha_{k+1}- \alpha_{k}$" in the definition of $\rho$-statistically ward continuity given in \cite{CakalliAvariationonstatisticalwardcontinuity}.
\begin{Def}
A real valued function $f$ is called \textit{$\rho$-statistically downward continuous}, or \textit{$S^{-}_{\rho}$-continuous} on a subset $E$ of $\mathbb{R}$ if it preserves $\rho$-statistically downward quasi-Cauchy sequences, i.e.  the sequence $(f(\alpha_{k}))$ is $\rho$-statistically-downward quasi-Cauchy whenever $(\alpha_{k})$ is a $\rho$-statistically-downward quasi-Cauchy sequence of points in $E$, writing in symbols,

$\lim_{n\rightarrow\infty}\frac{1}{\rho_{n} }|\{k\leq n: f(\alpha_{k+1})-f(\alpha_{k}) \geq\varepsilon\}|=0$, \\whenever $(\alpha_{k})$ is a sequence of points in $E$ such that

$\lim_{n\rightarrow\infty}\frac{1}{\rho_{n} }|\{k\leq n: \alpha_{k+1}-\alpha_{k} \geq\varepsilon\}|=0$ \\for every positive real number $\varepsilon$.
\end{Def}

It should be noted that $\rho$-statistically-downward continuity cannot be given by any $A$-continuity in the manner of \cite{ConnorandGrosseErdmannSequentialdefinitionsofcontinuityforrealfunctions}. We see that the composition of two $\rho$-statistically-downward continuous functions is $\rho$-statistically-downward continuous, and for every positive real number $c$, $cf$ is $\rho$-statistically-downward continuous, if $f$ is $\rho$-statistically-downward continuous.\\
We see in the following that the sum of two $\rho$-statistically-downward continuous functions is $\rho$-statistically-downward  continuous
\begin{Prop} \label{PropThesumoflambdadownwardcontinuousfunctionsislambdadownwardcontinuous}
If $f$ and $g$ are $\rho$-statistically-downward continuous functions, then $f+g$ is $\rho$-statistically-downward continuous.
\end{Prop}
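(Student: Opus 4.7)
The plan is to reduce the claim about $f+g$ to a purely sequence-level statement: that the sum of two $\rho$-statistically-downward quasi-Cauchy sequences is again $\rho$-statistically-downward quasi-Cauchy. Once that is established, the conclusion is immediate, because given any $\rho$-statistically-downward quasi-Cauchy sequence $(\alpha_k)$ in the common domain, the $\rho$-statistical-downward continuity of $f$ and $g$ yields that $(f(\alpha_k))$ and $(g(\alpha_k))$ both lie in $\Delta\rho^{-}$, and $(f+g)(\alpha_k)=f(\alpha_k)+g(\alpha_k)$ is their term-by-term sum.

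To prove the auxiliary sequence lemma, I would use the standard halving trick that handles the definition in spite of the non-symmetric $\geq \varepsilon$ condition. Fix sequences $(x_k),(y_k)\in \Delta\rho^{-}$ and fix $\varepsilon>0$. The elementary observation
\[
(x_{k+1}+y_{k+1})-(x_k+y_k)\geq \varepsilon \;\Longrightarrow\; x_{k+1}-x_k\geq \tfrac{\varepsilon}{2}\ \text{or}\ y_{k+1}-y_k\geq \tfrac{\varepsilon}{2}
\]
gives the set inclusion
\[
\{k\leq n: (x_{k+1}+y_{k+1})-(x_k+y_k)\geq \varepsilon\}\subseteq \{k\leq n: x_{k+1}-x_k\geq \tfrac{\varepsilon}{2}\}\cup \{k\leq n: y_{k+1}-y_k\geq \tfrac{\varepsilon}{2}\}.
\]
Subadditivity of counting measure then yields
\[
\frac{1}{\rho_n}\bigl|\{k\leq n: (x_{k+1}+y_{k+1})-(x_k+y_k)\geq \varepsilon\}\bigr| \leq \frac{1}{\rho_n}\bigl|\{k\leq n: x_{k+1}-x_k\geq \tfrac{\varepsilon}{2}\}\bigr|+\frac{1}{\rho_n}\bigl|\{k\leq n: y_{k+1}-y_k\geq \tfrac{\varepsilon}{2}\}\bigr|.
\]
By hypothesis both terms on the right tend to $0$ as $n\to\infty$, so the left-hand side does as well, proving $(x_k+y_k)\in \Delta\rho^{-}$.

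No serious obstacle is expected here; the only subtlety worth pointing out is the asymmetric condition $\alpha_{k+1}-\alpha_k\geq \varepsilon$ (as opposed to $|\alpha_{k+1}-\alpha_k|\geq \varepsilon$). This is why one cannot merely invoke the triangle inequality verbatim from the $\rho$-statistical ward continuity setting, but the one-sided halving implication above is the natural adaptation and makes the argument go through exactly as in the two-sided case.
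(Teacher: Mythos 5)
Your proposal is correct and follows essentially the same route as the paper: both reduce the claim to the set inclusion $\{k\leq n: (f+g)(\alpha_{k+1})-(f+g)(\alpha_{k})\geq \varepsilon\}\subseteq\{k\leq n: f(\alpha_{k+1})-f(\alpha_{k})\geq \tfrac{\varepsilon}{2}\}\cup\{k\leq n: g(\alpha_{k+1})-g(\alpha_{k})\geq \tfrac{\varepsilon}{2}\}$ and conclude by subadditivity of the counting measure. Your explicit remark that the one-sided condition still admits the halving argument is exactly the point the paper's proof relies on implicitly.
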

\begin{proof}
Let $f$, $g$ be $\rho$-statistically-downward continuous functions on a subset $E$ of $\mathbb{R}$. To prove that $f+g$ is $\rho$-statistically-downward continuous on $E$, take any $\rho$-statistically-downward quasi-Cauchy sequence $(\alpha_{k})$ of points in $E$. Then $(f(\alpha_{k}))$ and $(g(\alpha_{k}))$ are $\rho$-statistically-downward quasi-Cauchy sequences. Let $\varepsilon>0$ be given. Since $(f(\alpha_{k}))$ and $(g(\alpha_{k}))$ are $\rho$-statistically-downward quasi-Cauchy, we have
\[
\lim_{n\rightarrow\infty}\frac{1}{\rho_{n} }|\{k\leq n: f(\alpha_{k+1})-f(\alpha_{k}) \geq\frac{\varepsilon}{2}\}|=0
\]
and
\[
\lim_{n\rightarrow\infty}\frac{1}{\rho_{n} }|\{k\leq n: g(\alpha_{k+1}) - g(\alpha_{k}) \geq\frac{\varepsilon}{2}\}|=0
.\]
Hence \[
\lim_{n\rightarrow\infty}\frac{1}{\rho_{n} }|\{k\leq n: [f(\alpha_{k+1}) - f(\alpha_{k})]+ [g(\alpha_{k+1} - g(\alpha_{k})]) \geq \varepsilon\}|=0
\]
which follows from the inclusion\\ $\{k\leq n: (f+g)(\alpha_{k+1}) - (f+g)(\alpha_{k} \geq \varepsilon\}\subseteq{\{k\leq n:  f(\alpha_{k+1}) - f(\alpha_{k}) \geq\frac{\varepsilon}{2}\} \cup {\{k\leq n: g(\alpha_{k+1}) - g(\alpha_{k}) \geq\frac{\varepsilon}{2}\}}}$. \\
This completes the proof.
\end{proof}

\begin{Prop} \label{PropThecompositeoftwooflambdadownwardcontinuousfunctionsislambdadownwardcontinuous}
The composition of two $\rho$-statistically-downward continuous functions is $\rho$-statistically-downward continuous, i.e. if $f$ and $g$ are $\rho$-statistically-downward continuous functions on $\mathbb{R}$, then the composition $gof$ of $f$ and $g$ is $\rho$-statistically-downward continuous.
\end{Prop}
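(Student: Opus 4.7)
The plan is to unravel the definitions and apply the hypothesis twice in sequence. Specifically, to show that $g \circ f$ preserves $\rho$-statistically downward quasi-Cauchy sequences, I would start with an arbitrary sequence $(\alpha_{k})$ of points in $\mathbb{R}$ satisfying
\[
\lim_{n\rightarrow\infty}\frac{1}{\rho_{n}}|\{k\leq n:\ \alpha_{k+1}-\alpha_{k} \geq \varepsilon\}|=0
\]
for every $\varepsilon>0$, i.e.\ $(\alpha_{k}) \in \Delta \rho^{-}$, and show that $((g\circ f)(\alpha_{k})) \in \Delta \rho^{-}$.

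The first step is to apply the $\rho$-statistical downward continuity of $f$ to the sequence $(\alpha_{k})$. This yields that the image sequence $(f(\alpha_{k}))$ is itself $\rho$-statistically downward quasi-Cauchy, i.e.\ it satisfies
\[
\lim_{n\rightarrow\infty}\frac{1}{\rho_{n}}|\{k\leq n:\ f(\alpha_{k+1})-f(\alpha_{k}) \geq \varepsilon\}|=0
\]
for every $\varepsilon>0$. The second step is to treat $(\beta_{k}):=(f(\alpha_{k}))$ as a new input and invoke the $\rho$-statistical downward continuity of $g$, concluding that $(g(\beta_{k}))=(g(f(\alpha_{k})))=((g\circ f)(\alpha_{k}))$ is also in $\Delta \rho^{-}$.

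Since the composition $g\circ f$ sends an arbitrary element of $\Delta \rho^{-}$ to an element of $\Delta \rho^{-}$, the definition of $\rho$-statistical downward continuity is satisfied by $g\circ f$, which completes the proof. There is no real obstacle here: the argument is a direct two-step application of the preservation property defining the notion, and it does not require any estimate involving $\varepsilon/2$ or inclusions of index sets as in Proposition \ref{PropThesumoflambdadownwardcontinuousfunctionsislambdadownwardcontinuous}. The only point worth flagging is that, for the chain to make sense, the codomain of $f$ must lie in the domain of $g$; since both functions are assumed $\rho$-statistically downward continuous on all of $\mathbb{R}$ in the statement, this is automatic.
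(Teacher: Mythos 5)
Your proposal is correct and follows exactly the paper's own argument: apply the downward continuity of $f$ to get that $(f(\alpha_{k}))$ is $\rho$-statistically downward quasi-Cauchy, then apply the downward continuity of $g$ to that image sequence. Nothing further is needed.
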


\begin{proof}
Let $f$ and $g$ be $\rho$-statistically-downward continuous functions on $\mathbb{R}$, and $(\alpha _{n})$ be a $\rho$-statistically-downward quasi Cauchy sequence of points in $\mathbb{R}$. As $f$ is $\rho$-statistically-downward continuous, the transformed sequence $(f(\alpha _{n}))$ is a $\rho$-statistically downward quasi Cauchy sequence. Since $g$ is $\rho$-statistically-downward continuous, the transformed sequence $g(f(\alpha _{n}))$ of the sequence $(f(\alpha _{n}))$ is a $\rho$-statistically downward quasi Cauchy sequence. This completes the proof of the theorem.
\end{proof}

In connection with $\rho$-statistically-downward quasi-Cauchy sequences, $\rho$-statistically-quasi-Cauchy sequences, $\rho$-statistically-statistical convergent sequences, and convergent sequences the problem arises to investigate the following types of  "continuity" of functions on $\mathbb{R}$:
\begin{description}
\item[($\delta S^{-} _{\rho} $)] $(\alpha_{k}) \in {\Delta S^{-} _{\rho}} \Rightarrow  (f(\alpha_{k})) \in {\Delta S^{-} _{\rho}}$
\item[($\delta S^{-} _{\rho} c$)] $(\alpha_{k}) \in {\Delta S^{-} _{\rho}} \Rightarrow  (f(\alpha_{k})) \in {c}$
\item[$(c)$] $(\alpha_{k}) \in {c} \Rightarrow  (f(\alpha_{k})) \in {c}$
\item[$(c\delta S^{-} _{\rho})$] $(\alpha_{k}) \in {c} \Rightarrow  (f(\alpha_{k})) \in {\Delta S^{-} _{\rho}}$
\item[$(S_{\rho})$] $(\alpha_{k}) \in {S_{\rho}} \Rightarrow  (f(\alpha_{k})) \in {S_{\rho}}$
\item[($\delta S_{\rho})$] $(\alpha_{k}) \in {\Delta S _{\rho}} \Rightarrow  (f(\alpha_{k})) \in {\Delta S_{\rho}}$.
\end{description}
We see that $(\delta S^{-} _{\rho})$ is $\rho$-statistically downward continuity of $f$, $(S_{\rho})$ is the $\rho$-statistical continuity, and $(\delta S_{\rho})$ is the $\rho$-statistically-ward continuity. It is easy to see that $(\delta S^{-} _{\rho} c)$ implies $(\delta S^{-} _{\rho})$; $(\delta S^{-} _{\rho})$ does not imply $(\delta S^{-} _{\rho} c)$; $(\delta S^{-} _{\rho})$ implies $(c\delta S^{-} _{\rho})$; $(c\delta S^{-} _{\rho})$ does not imply $(\delta S^{-} _{\rho})$; $(\delta S^{-} _{\rho} c)$ implies $(c)$, and $(c)$ does not imply $(\delta S^{-} _{\rho} c)$; and $(c)$ implies $(c\delta S^{-}_{\rho})$. We see that $(c)$ can be replaced by not only $\rho$-statistically-continuity (\cite{CakalliAvariationonstatisticalwardcontinuity}), but also statistically-continuity (\cite{CakalliStatisticalquasiCauchysequences}),
lacunary statistical continuity (\cite{CakalliandArasandSonmezLacunarystatisticalwardcontinuity}), $N_{\theta}$-sequential continuity (\cite{CakalliandKaplanAstudyonNthetaquasiCauchysequences}), $I$-sequential continuity (\cite{CakalliandHazarikaIdealquasiCauchysequences}), and more generally  $G$-sequential continuity (\cite{CakalliSequentialdefinitionsofcompactness}, \cite{CakalliOnGcontinuity}).\\
Now we give the implication $(\delta S^{-} _{\rho})$ implies ($\delta S_{\rho}$), i.e. any $\rho$-statistically-downward continuous function is $\rho$-statistically-ward continuous.
\begin{Thm} \label{TheoremLambdatatisticaldownwardcontinuityimplieslambdastatisticalwardcontinuity}  If $f$ is $\rho$-statistically-downward continuous on a subset $E$ of $\mathbb{R}$, then it is $\rho$-statistically-ward continuous on $E$.
\end{Thm}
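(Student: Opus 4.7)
The plan is to unpack the definition of $\rho$-statistical ward continuity: given a $\rho$-statistically quasi-Cauchy sequence $(\alpha_k)$ in $E$, we need to verify that $(f(\alpha_k))$ is $\rho$-statistically quasi-Cauchy, i.e., that for every $\varepsilon>0$ both
\[
\frac{1}{\rho_n}|\{k\leq n: f(\alpha_{k+1})-f(\alpha_k)\geq \varepsilon\}| \longrightarrow 0
\qquad\text{and}\qquad
\frac{1}{\rho_n}|\{k\leq n: f(\alpha_k)-f(\alpha_{k+1})\geq \varepsilon\}| \longrightarrow 0.
\]
The first set is immediate, because the paper already records that every $\rho$-statistically quasi-Cauchy sequence is $\rho$-statistically downward quasi-Cauchy; applying the downward continuity hypothesis to $(\alpha_k)$ directly yields the first density estimate.

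For the second set I would introduce the interleaved auxiliary sequence $(\gamma_j)$ defined by $\gamma_{2j-1}=\alpha_{j+1}$ and $\gamma_{2j}=\alpha_j$, whose design is precisely to turn the downward jumps of $f\circ\alpha$ into upward jumps of $f\circ\gamma$. I would then verify $(\gamma_j)\in\Delta\rho^{-}$ by checking its consecutive differences at odd and even positions separately: at odd $j=2i-1$ one has $\gamma_{2i}-\gamma_{2i-1}=\alpha_i-\alpha_{i+1}$, and $\{i:\alpha_i-\alpha_{i+1}\geq\varepsilon\}$ has $\rho$-density zero by the $\rho$-statistical QC property of $(\alpha_k)$; at even $j=2i$ one has $\gamma_{2i+1}-\gamma_{2i}=\alpha_{i+2}-\alpha_i$, which is the sum of two consecutive $\alpha$-differences, and a standard union bound at threshold $\varepsilon/2$ shows that the relevant index set is again $\rho$-density zero.

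Invoking the downward-continuity hypothesis on $(\gamma_j)$, I conclude that $(f(\gamma_j))\in\Delta\rho^{-}$. The identity $f(\gamma_{2j})-f(\gamma_{2j-1})=f(\alpha_j)-f(\alpha_{j+1})$ embeds the set $\{k\leq n: f(\alpha_k)-f(\alpha_{k+1})\geq\varepsilon\}$ inside $\{j\leq 2n: f(\gamma_{j+1})-f(\gamma_j)\geq\varepsilon\}$, so the former has $\rho_{2n}$-density zero.

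The main obstacle is the transition from a $\rho_{2n}$-normalized density to the required $\rho_n$-normalization, which amounts to controlling the ratio $\rho_{2n}/\rho_n$. Here I would use the standing hypotheses on $(\rho_n)$ -- that it is nondecreasing, tends to infinity, satisfies $\Delta\rho_n=O(1)$, and obeys $\limsup_n \rho_n/n<\infty$ -- to bound $\rho_{2n}\leq \rho_n+Cn$ and hence force $\rho_{2n}/\rho_n$ to remain bounded on the relevant scales. Once this is in hand, the $\rho_n$-density of the second set also tends to zero, completing the proof; everything else is routine bookkeeping, and the interleaving trick is the only nontrivial move.
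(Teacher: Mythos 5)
Your strategy is essentially the one the paper uses: reduce the two-sided condition on $(f(\alpha_k))$ to the one-sided downward condition by feeding $f$ an interleaved copy of $(\alpha_k)$ in which the reversed differences $\alpha_k-\alpha_{k+1}$ appear as consecutive differences. Your execution is in fact more explicit than the paper's: you split off the upward jumps of $(f(\alpha_k))$, which follow directly from downward continuity applied to $(\alpha_k)$ itself, and you verify carefully that the interleaved sequence $(\gamma_j)$ lies in $\Delta\rho^{-}$; the paper's displayed interleaving is garbled and all of its density transfers are asserted without comment.

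There is, however, a genuine gap at the last step, and it sits exactly where you sense the obstacle. Boundedness of $\rho_{2n}/\rho_n$ does \emph{not} follow from the stated hypotheses. From $\Delta\rho_n=O(1)$ you correctly get $\rho_{2n}\le\rho_n+Cn$, hence $\rho_{2n}/\rho_n\le 1+Cn/\rho_n$; but the hypothesis $\limsup_n\rho_n/n<\infty$ bounds $\rho_n/n$ from \emph{above}, whereas what you need here is a bound on $n/\rho_n$, i.e.\ $\liminf_n\rho_n/n>0$, which is not assumed. For instance, taking $n_{k+1}=n_k^{2}$ and letting $\rho$ climb linearly from $\sqrt{n_k}$ to $n_k$ on $[n_k,2n_k]$ and stay constant on $[2n_k,n_{k+1}]$ produces a non-decreasing sequence tending to infinity with $\Delta\rho_n\le 1$ and $\rho_n\le n$, yet $\rho_{2n_k}/\rho_{n_k}=\sqrt{n_k}\to\infty$. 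So as written your final renormalization does not go through; you would need either the extra hypothesis $\liminf_n\rho_n/n>0$ or a different way to pass from the $\rho_{2n}$-density of the bad set for $(\gamma_j)$ to the $\rho_n$-density of the bad set for $(\alpha_k)$. For what it is worth, the paper's own proof dilates the index by a factor of roughly three through its interleaving and then silently states the conclusion in terms of $\rho_n$, so it contains the same unaddressed difficulty; you have at least isolated it, even though your proposed resolution does not work as stated.
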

\begin{proof}
Let $(\alpha_{k})$ be any $\rho$-statistically-quasi-Cauchy sequence of points in $E$. Then the sequence $$(\alpha_{1},\alpha_{2},\alpha_{1},\alpha_{2},\alpha_{3},\alpha_{2}, \alpha_{3},...,\alpha_{n-1},\alpha_{k},\alpha_{n-1},\alpha_{k},\alpha_{n+1},\alpha_{k},\alpha_{n+1},...)$$ is also $\rho$-statistically quasi-Cauchy. Then it is $\rho$-statistically-downward quasi-Cauchy. As $f$ is $\rho$-statistically-downward continuous, the sequence $$(f(\alpha_{1}),f(\alpha_{2}),f(\alpha_{1}),f(\alpha_{2}),f(\alpha_{3}),f(\alpha_{2}),f(\alpha_{3}),...,f(\alpha_{n-1}),$$ $$f(\alpha_{k}),f(\alpha_{n-1}),f(\alpha_{k}),f(\alpha_{n+1}),f(\alpha_{k}),f(\alpha_{n+1}),...)$$ is $\rho$-statistically-downward quasi-Cauchy. It follows from this that
\[ \lim_{n\rightarrow\infty}\frac{1}{\rho_{n} }|\{k\leq n: |f(\alpha_{k+1})-f(\alpha_{k})| \geq \varepsilon \}|=0 \] for every $\varepsilon>0$. This completes the proof of the theorem.
\end{proof}
We note that the converse of the preceding theorem is not always true, i.e. there are $\rho$-statistically-ward continuous functions which are not $\rho$-statistically-downward continuous.
\begin{Exmp}
Write $\boldsymbol{\rho}=(\rho_{n})$ as $\rho_{n}=n+\frac{1}{n}$ for each $n>1$, $\rho_{1}=1$, and consider the sequence $\boldsymbol{\alpha}=(n)$. Then we see that the function $f$ defined by $f(x)=-x$ for every $x \in {\mathbb{R}}$ is an $\rho$-statistically-ward continuous function, but not $\rho$-statistically-downward continuous.
\end{Exmp}
Now we give the implication $(\delta S^{-} _{\rho})$ implies $(S _{\rho})$, i.e. any $\rho$-statistically-downward continuous function is $\rho$-statistically-continuous.
\begin{Cor} \label{TheoremLambdastatisticallydownwardcontinuityimplieslambdastatisticalcontinuity}  If $f$ is $\rho$-statistically-downward continuous on a subset $E$ of $\mathbb{R}$, then it is $\rho$-statistically-continuous on $E$.
\end{Cor}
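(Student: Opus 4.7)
The plan is a two-step reduction. First, apply Theorem \ref{TheoremLambdatatisticaldownwardcontinuityimplieslambdastatisticalwardcontinuity} to conclude that $f$ is $\rho$-statistically ward continuous on $E$. Second, invoke the implication $(\delta S_{\rho})\Rightarrow(S_{\rho})$ established in \cite{CakalliAvariationonstatisticalwardcontinuity}, which upgrades $\rho$-statistical ward continuity to $\rho$-statistical continuity. The corollary then follows at once.

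For a self-contained argument I would instead use an interlacing trick in the spirit of the previous theorem. Given any sequence $(\alpha_k)$ of points in $E$ with $S_{\rho}$-limit $\ell\in E$, I would form the auxiliary sequence $(\beta_k)$ defined by $\beta_{2j-1}=\ell$ and $\beta_{2j}=\alpha_j$, then verify that $(\beta_k)$ is itself $\rho$-statistically convergent to $\ell$ (using monotonicity of $(\rho_n)$ to absorb the index-halving, since $\{k\le n:|\beta_k-\ell|\ge\varepsilon\}$ lies in the even indices and injects into $\{j\le n/2:|\alpha_j-\ell|\ge\varepsilon\}$). This forces $(\beta_k)$ to be $\rho$-statistically quasi-Cauchy (via the triangle inequality $|\beta_{k+1}-\beta_k|\le|\beta_{k+1}-\ell|+|\ell-\beta_k|$ together with $\rho_{n+1}/\rho_n\to 1$, which follows from $\Delta\rho_n=O(1)$ and $\rho_n\to\infty$), hence $\rho$-statistically downward quasi-Cauchy. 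So by hypothesis $(f(\beta_k))$ is $\rho$-statistically downward quasi-Cauchy. Splitting the consecutive differences $f(\beta_{k+1})-f(\beta_k)$ by parity of $k$---giving $f(\alpha_j)-f(\ell)$ at odd indices $k=2j-1$ and $f(\ell)-f(\alpha_{j+1})$ at even indices $k=2j$---would then extract the $\rho$-statistical convergence of $(f(\alpha_k))$ to $f(\ell)$ in both directions simultaneously.

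The hard part will be the reindexing in this last step: the bound obtained for $(f(\beta_k))$ involves $\rho_{2n}$, while the desired statement for $(f(\alpha_k))$ involves $\rho_n$, and carrying out this transfer requires that $\rho_{2n}/\rho_n$ remain bounded. The standing hypotheses on $(\rho_n)$ ($\limsup\rho_n/n<\infty$, $\Delta\rho_n=O(1)$, monotonicity) are what make this bookkeeping work: otherwise $\rho_n$ would grow superlinearly along a subsequence, contradicting $\limsup\rho_n/n<\infty$. Precisely because the reduction through Theorem \ref{TheoremLambdatatisticaldownwardcontinuityimplieslambdastatisticalwardcontinuity} and the implication cited above bypasses this reindexing entirely, that route is the cleaner one, and it is the path I would follow in the final write-up.
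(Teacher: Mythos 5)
Your proposal matches the paper's proof: the paper opens by noting exactly your first reduction (via Theorem \ref{TheoremLambdatatisticaldownwardcontinuityimplieslambdastatisticalwardcontinuity} and the implication cited from \cite{CakalliAvariationonstatisticalwardcontinuity}) and then writes out, ``for completeness,'' precisely your second argument --- interlacing $(\alpha_k)$ with the limit $\ell$ so that both signed differences $f(\alpha_k)-f(\ell)$ and $f(\ell)-f(\alpha_k)$ occur among consecutive differences of the transformed sequence. The paper does not address the $\rho_{2n}/\rho_n$ reindexing issue you flag (it simply asserts the final limit, and note your claim that $\limsup_n \rho_n/n<\infty$ alone forces $\rho_{2n}/\rho_n$ to be bounded is not quite right, since $(\rho_n)$ may have long constant stretches), so your decision to present the reduction route in the final write-up is the safer one.
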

\begin{proof}
Although the proof follows from \cite[Theorem 3]{CakalliAvariationonstatisticalwardcontinuity}, the preceding theorem, and the fact that $\rho$-statistical continuity coincides with ordinary continuity, we give a direct proof in the following for completeness. Let $(\alpha_{k})$ be any $\rho$-statistically convergent sequence with $S_{\rho}-\lim_{k\rightarrow\infty}\alpha_{k}=\ell$. Then $$(\alpha_{1}, \ell , \alpha_{1}, \ell , \alpha_{2}, \ell, \alpha_{2}, \ell,..., \alpha_{k}, \ell, \alpha_{k}, \ell,...)$$ is also $\rho$-statistically convergent to $\ell$. Thus it is $\rho$-statistically-downward quasi-Cauchy. Hence $$(f(\alpha_{1}), f(\ell), f(\alpha_{1}),f(\ell), f(\alpha_{2}), f(\ell), f(\alpha_{2}), f(\ell),...,f(\alpha_{k}), f(\ell), f(\alpha_{k}), f(\ell),...)$$ is $\rho$-statistically-downward quasi-Cauchy. It follows from this that
\[ \lim_{n\rightarrow\infty}\frac{1}{\rho_{n} }|\{k\leq n: |f(\alpha_{k})-f(\ell)| \geq \varepsilon \}|=0 \] for every $\varepsilon>0$. This completes the proof.
\end{proof}
Observing that $\rho$-statistically-continuity implies ordinary continuity, we note that it follows from Theorem \ref{TheoremLambdastatisticallydownwardcontinuityimplieslambdastatisticalcontinuity} that $\rho$-statistically-downward continuity implies not only ordinary continuity, but also some other kinds of continuities, namely, lacunary statistical continuity, statistical continuity (\cite{CakalliAvariationonstatisticalwardcontinuity}), $N_{\theta}$-sequential continuity, $I$-continuity for any non-trivial admissible ideal $I$ of $\mathbb{N}$ (\cite[Theorem 4]{CakalliandHazarikaIdealquasiCauchysequences}, \cite{CakalliAvariationonwardcontinuity}), and $G$-continuity for any regular subsequential method $G$ (see \cite{ConnorandGrosseErdmannSequentialdefinitionsofcontinuityforrealfunctions}, \cite{CakalliSequentialdefinitionsofcompactness}, and \cite{CakalliOnGcontinuity}).
\begin{Thm} \label{TheoremLambdadownwardtatisticalcontinousimageoflambdadownwardstatisticalwardcompactsubset} $\rho$-statistically-downward continuous image of any $\rho$-statistically-downward compact subset of $\mathbb{R}$ is $\rho$-statistically-downward  compact.
\end{Thm}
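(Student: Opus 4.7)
The plan is to imitate the classical argument that the continuous image of a sequentially compact set is sequentially compact, replacing ``sequentially compact'' by ``$\rho$-statistically-downward compact'' and ``continuous'' by ``$\rho$-statistically-downward continuous.'' Concretely, I would let $f$ be $\rho$-statistically-downward continuous on a $\rho$-statistically-downward compact subset $E\subseteq\mathbb{R}$, and start with an arbitrary sequence $(\beta_{n})$ in the image $f(E)$. By definition of image, I can choose, for each $n$, a point $\alpha_{n}\in E$ with $f(\alpha_{n})=\beta_{n}$, producing a sequence $(\alpha_{n})$ of points in $E$.

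Next I would invoke $\rho$-statistically-downward compactness of $E$ to extract a subsequence $(\alpha_{n_{k}})$ of $(\alpha_{n})$ which is $\rho$-statistically-downward quasi-Cauchy, i.e.\ satisfies
\[
\lim_{n\rightarrow\infty}\frac{1}{\rho_{n}}\bigl|\{k\leq n:\alpha_{n_{k+1}}-\alpha_{n_{k}}\geq\varepsilon\}\bigr|=0
\]
for every $\varepsilon>0$. Since $(\alpha_{n_{k}})$ is itself a sequence of points in $E$, I would then apply $\rho$-statistically-downward continuity of $f$ directly to this sequence to conclude that the transformed sequence $(f(\alpha_{n_{k}}))=(\beta_{n_{k}})$ is also $\rho$-statistically-downward quasi-Cauchy. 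Thus the arbitrary sequence $(\beta_{n})$ in $f(E)$ has a $\rho$-statistically-downward quasi-Cauchy subsequence, which by definition means $f(E)$ is $\rho$-statistically-downward compact.

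There is really no deep obstacle here; the argument is a routine transfer. The only small point I would be careful about is the logical step that the $\rho$-statistically-downward continuity assumption, stated for sequences in $E$, applies to the subsequence $(\alpha_{n_{k}})$. This is immediate because a subsequence of a sequence in $E$ is itself a sequence in $E$, so the preservation property in the definition of $\rho$-statistically-downward continuity is directly applicable. Alternatively, as a sanity check, one could bypass the sequence argument entirely by combining Theorem \ref{TheoremLambdadownwardtatisticalwardcompactiffboundedabove} (which identifies $\rho$-statistically-downward compactness with boundedness above) with the fact, following from Corollary \ref{TheoremLambdastatisticallydownwardcontinuityimplieslambdastatisticalcontinuity}, that a $\rho$-statistically-downward continuous function is in particular continuous and therefore sends bounded above sets to bounded above sets on the relevant domains; but the direct sequential argument above is cleaner and does not require appealing to the structural characterization.
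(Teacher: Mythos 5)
Your main argument is correct and is essentially the same as the paper's proof: lift the sequence from $f(E)$ to $E$, extract a $\rho$-statistically-downward quasi-Cauchy subsequence by compactness, and apply the preservation property of $f$ to that subsequence. One caution about your parenthetical ``alternative'' route: continuity does \emph{not} send bounded-above sets to bounded-above sets (consider $f(x)=x^{2}$ on $(-\infty,0]$), so that detour would fail; fortunately you correctly discard it in favor of the direct sequential argument.
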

\begin{proof}
Let $E$ be a subset of $\mathbb{R}$, $f:E\longrightarrow$ $\mathbb{R}$ be an $\rho$-statistically-downward continuous function, and $A$ be an $\rho$-statistically-downward compact subset of $E$. Take any sequence $\boldsymbol{\beta}=(\beta_{n})$ of points in $f(A)$. Write $\beta_{n}=f(\alpha_{k})$, where $\alpha_{k}\in {A}$ for each $n \in{\mathbb{N}}$, $\boldsymbol{\alpha}=(\alpha_{k})$. $\rho$-statistically-downward compactness of $A$ implies that there is an $\rho$-statistically-downward quasi-Cauchy subsequence $\boldsymbol{\xi}$ of the sequence of $\boldsymbol{\alpha}$. Write $\boldsymbol{\eta}=(\eta_{k})=f(\boldsymbol{\xi})=(f(\xi_{k}))$. Then $\boldsymbol{\eta}$ is an $\rho$-statistically-downward quasi-Cauchy subsequence of the sequence $\boldsymbol{\beta}$. This completes the proof of the theorem.
\end{proof}
\begin{Thm} \label{TheoremLambdadownwardstatisticalcontinuityonlambdastatisticallydownwardcompactsubsetimpliesuniformcontinuity} Any $\rho$-statistically-downward continuous real valued function on a $\rho$-statistically-downward compact subset of $\mathbb{R}$ is uniformly continuous.
\end{Thm}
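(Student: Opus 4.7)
The plan is to argue by contradiction. I will suppose $f$ is $\rho$-statistically downward continuous on a $\rho$-statistically downward compact set $E$ but fails to be uniformly continuous. Standard extraction then produces $\varepsilon_{0}>0$ and sequences $(x_{k}),(y_{k})$ in $E$ with $|x_{k}-y_{k}|<1/k$ and $|f(x_{k})-f(y_{k})|\geq \varepsilon_{0}$ for every $k$. By passing to a subsequence and, if necessary, swapping the roles of $x_{k}$ and $y_{k}$, I may arrange that $f(y_{k})-f(x_{k})\geq \varepsilon_{0}$ for all $k$.

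Next I invoke the structural input: by Theorem~\ref{TheoremLambdadownwardtatisticalwardcompactiffboundedabove}, $E$ is bounded above, so $(x_{k})$ is bounded above. I split into two cases. If $(x_{k})$ is also bounded below, then Bolzano--Weierstrass together with $|x_{k}-y_{k}|\to 0$ gives a subsequence along which $x_{k}\to a$ and $y_{k}\to a$. If $(x_{k})$ is unbounded, it must be unbounded below, and I will recursively extract indices $n_{k}$ so that $|x_{n_{k}}-y_{n_{k}}|<1/k$, $f(y_{n_{k}})-f(x_{n_{k}})\geq\varepsilon_{0}$, and $x_{n_{k+1}}<y_{n_{k}}-1$; the last condition is available because the chosen subsequence of $(x_{n})$ tends to $-\infty$. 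After relabelling I keep the names $(x_{k}),(y_{k})$ in either case.

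Now I form the interlaced sequence $\xi=(x_{1},y_{1},x_{2},y_{2},x_{3},y_{3},\ldots)$. The odd-position differences $\xi_{2k}-\xi_{2k-1}=y_{k}-x_{k}$ tend to $0$. In the bounded case the even-position differences $\xi_{2k+1}-\xi_{2k}=x_{k+1}-y_{k}$ also tend to $0$ since both terms converge to $a$, while in the unbounded case the construction forces $\xi_{2k+1}-\xi_{2k}<-1$. In either case, for every $\varepsilon>0$ the set $\{j:\xi_{j+1}-\xi_{j}\geq\varepsilon\}$ is finite, so, since $\rho_{n}\to\infty$, the sequence $\xi$ is $\rho$-statistically downward quasi-Cauchy. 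By hypothesis on $f$, the image $(f(\xi_{j}))$ must therefore also be $\rho$-statistically downward quasi-Cauchy.

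But by construction $f(\xi_{2k})-f(\xi_{2k-1})=f(y_{k})-f(x_{k})\geq\varepsilon_{0}$ for every $k$, so $|\{j\leq 2n:f(\xi_{j+1})-f(\xi_{j})\geq\varepsilon_{0}\}|\geq n$. The hypothesis $\limsup_{n}\rho_{n}/n<\infty$ supplies a constant $C>0$ with $\rho_{2n}\leq 2Cn$ eventually, whence $\frac{1}{\rho_{2n}}|\{j\leq 2n:f(\xi_{j+1})-f(\xi_{j})\geq\varepsilon_{0}\}|\geq \frac{1}{2C}>0$ for all large $n$, contradicting $\rho$-statistical downward quasi-Cauchyness of $(f(\xi_{j}))$. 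The step I expect to be the main obstacle is the extraction in the unbounded sub-case, where I must simultaneously keep consecutive pairs close (so the odd differences of $\xi$ still vanish) and force consecutive pairs to be strictly decreasing (so the even differences introduce no upward jumps); the diagonal choice $x_{n_{k+1}}<y_{n_{k}}-1$ is what accomplishes both.
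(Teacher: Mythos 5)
Your proof is correct, but it takes a genuinely different route from the paper's. The paper argues directly from the definition of $\rho$-statistical downward compactness: it extracts a $\rho$-statistically downward quasi-Cauchy subsequence $(\alpha_{n_{k}})$ of the first sequence, observes that the companion subsequence $(\beta_{n_{k}})$ is also downward quasi-Cauchy because $\beta_{n_{k+1}}-\beta_{n_{k}}$ decomposes as a sum of two null differences and the differences of $(\alpha_{n_{k}})$, interlaces the two, and asserts that the image of the interlaced sequence fails to be downward quasi-Cauchy. You instead route everything through Theorem \ref{TheoremLambdadownwardtatisticalwardcompactiffboundedabove} (compact iff bounded above) and a concrete case analysis: Bolzano--Weierstrass when $(x_{k})$ is bounded, and a descending diagonal extraction $x_{n_{k+1}}<y_{n_{k}}-1$ when it is unbounded below, which makes the between-pair differences of the interlaced sequence strictly negative. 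Your version is longer but closes two points the paper leaves implicit: first, you fix the sign so that $f(y_{k})-f(x_{k})\geq\varepsilon_{0}$ (the hypothesis only gives $|f(x_{k})-f(y_{k})|\geq\varepsilon_{0}$, and without the swap the large differences could all point downward, in which case the image interlacing need not visibly violate the downward condition); second, you actually carry out the density computation, using $\limsup_{n}\rho_{n}/n<\infty$ to show that $n$ bad indices among the first $2n$ force a positive lower density, which is exactly where that standing hypothesis on $(\rho_{n})$ is needed. The paper's approach buys brevity and generality (it never needs the characterization of compact sets); yours buys a fully verifiable contradiction at the cost of invoking Theorem \ref{TheoremLambdadownwardtatisticalwardcompactiffboundedabove} and splitting into cases.
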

\begin{proof}
Let $E$ be an  $\rho$-statistically-downward compact subset of $\mathbb{R}$ and let $f:E\longrightarrow$ $\mathbb{R}$. Suppose that $f$ is not uniformly continuous on $E$ so that there exists an  $\varepsilon_{0} > 0$ such that for any $\delta >0$, there are $x, y \in{E}$ with $|x-y|<\delta$ but $|f(x)-f(y)| \geq \varepsilon_{0}$. For each positive integer $n$, there are $\alpha_{n}$ and $\beta_{n}$ such that $|\alpha_{n}-\beta_{n}|<\frac{1}{n}$, and $|f(\alpha_{n})-f(\beta_{n})|\geq \varepsilon_{0}$. Since $E$ is $\rho$-statistically-downward compact, there exists an $\rho$-statistically-downward quasi-Cauchy subsequence $(\alpha_{n_{k}})$ of the sequence $(\alpha_{k})$. It is clear that the corresponding subsequence $(\beta_{n_{k}})$ of the sequence $(\beta_{n})$ is also $\rho$-statistically-downward quasi-Cauchy, since $(\beta_{n_{k+1}}-\beta_{n_{k}})$ is a sum of three $\rho$-statistically-downward quasi-Cauchy sequences, i.e. $$\beta_{n_{k+1}}-\beta_{n_{k}}=(\beta_{n_{k+1}}-\alpha_{n_{k+1}})+(\alpha_{n_{k+1}}-\alpha_{n_{k}})+(\alpha_{n_{k}}-\beta_{n_{k}}).$$ Then the sequence $$(\beta_{n_{1}}, \alpha_{n_{1}},  \beta_{n_{2}}, \alpha_{n_{2}}, \beta_{n_{3}}, \alpha_{n_{3}}, ..., \beta_{n_{k}}, \alpha_{n_{k}}, ...)$$ is $\rho$-statistically-downward quasi-Cauchy, since the sequence $(\beta_{n_{k+1}}-\alpha_{n_{k}})$ is a $\rho$-statistically-downward quasi-Cauchy sequence which follows from the equality $$\beta_{n_{k+1}}-\alpha_{n_{k}}=\beta_{n_{k+1}}-\beta_{n_{k}}+\beta_{n_{k}}-\alpha_{n_{k}}.$$ But the sequence $$(f(\beta_{n_{1}}), f(\alpha_{n_{1}}), f(\beta_{n_{2}}), f(\alpha_{n_{2}}), f(\beta_{n_{3}}), f(\alpha_{n_{3}}),..., f(\beta_{n_{k}}), f(\alpha_{n_{k}}),...)$$ is not $\rho$-statistically-downward quasi-Cauchy. Thus $f$ does not preserve $\rho$-statistically-downward quasi-Cauchy sequences. This contradiction completes the proof of the theorem.
\end{proof}

\begin{Thm} \label{TheoremuniformlycontinuousfunctiononEtransformsquasiCauchytolamdquasiCauchy}
If a function $f$ is uniformly continuous on a subset $E$ of $\mathbb{R}$, then $(f(\alpha_{k}))$ is $\rho$-statistically-downward quasi Cauchy whenever $(\alpha_{k})$ is a quasi-Cauchy sequence of points in $E$.
\end{Thm}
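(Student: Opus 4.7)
The plan is to chain together uniform continuity, the quasi-Cauchy hypothesis, and the fact that $\rho_n \to \infty$, showing that in fact $(f(\alpha_k))$ is quasi-Cauchy in the ordinary two-sided sense, which is far stronger than what the conclusion demands. The observation driving the proof is that the set counted in the $\rho$-statistical downward condition will turn out to be finite, so its size is bounded by a constant while $\rho_n \to \infty$.

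Concretely, I fix an arbitrary $\varepsilon > 0$ and apply uniform continuity of $f$ on $E$ to obtain $\delta > 0$ such that whenever $x, y \in E$ satisfy $|x - y| < \delta$, we have $|f(x) - f(y)| < \varepsilon$. Since $(\alpha_k)$ is quasi-Cauchy in $E$, there exists $K \in \mathbb{N}$ such that $|\alpha_{k+1} - \alpha_k| < \delta$ for every $k \geq K$. Combining these two facts yields $|f(\alpha_{k+1}) - f(\alpha_k)| < \varepsilon$ for all $k \geq K$, and in particular
\[
f(\alpha_{k+1}) - f(\alpha_k) < \varepsilon \quad \text{for every } k \geq K.
\]

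Consequently, for every $n \geq K$,
\[
\{k \leq n : f(\alpha_{k+1}) - f(\alpha_k) \geq \varepsilon\} \subseteq \{1, 2, \ldots, K-1\},
\]
so this set has cardinality at most $K - 1$. Dividing by $\rho_n$ gives
\[
\frac{1}{\rho_n} \bigl|\{k \leq n : f(\alpha_{k+1}) - f(\alpha_k) \geq \varepsilon\}\bigr| \leq \frac{K-1}{\rho_n},
\]
which tends to $0$ as $n \to \infty$ because $(\rho_n)$ is non-decreasing with $\rho_n \to \infty$. Since $\varepsilon > 0$ was arbitrary, $(f(\alpha_k))$ is $\rho$-statistically downward quasi-Cauchy.

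There is essentially no genuine obstacle here; the only point that could be mistaken is forgetting that the one-sided inequality $f(\alpha_{k+1}) - f(\alpha_k) \geq \varepsilon$ is implied by $|f(\alpha_{k+1}) - f(\alpha_k)| \geq \varepsilon$, so the two-sided control furnished by uniform continuity is more than enough. The proof does not use the $\limsup$ or $\Delta \rho_n = O(1)$ hypotheses on $(\rho_n)$ beyond the basic fact that $\rho_n \to \infty$.
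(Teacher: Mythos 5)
Your proof is correct and follows essentially the same route as the paper's: uniform continuity plus the quasi-Cauchy hypothesis make the set $\{k\leq n : f(\alpha_{k+1})-f(\alpha_{k})\geq\varepsilon\}$ finite (contained in a fixed initial segment), and dividing its cardinality by $\rho_{n}\rightarrow\infty$ finishes. (One small slip in your closing remark only: the implication runs the other way --- $f(\alpha_{k+1})-f(\alpha_{k})\geq\varepsilon$ implies $|f(\alpha_{k+1})-f(\alpha_{k})|\geq\varepsilon$, equivalently $|f(\alpha_{k+1})-f(\alpha_{k})|<\varepsilon$ yields the one-sided bound you need --- but the body of your proof uses the correct direction, so nothing is affected.)
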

\begin{proof} Let $E$ be a subset of $\mathbb{R}$ and let $f$ be a uniformly continuous function on $E$. Take any quasi-Cauchy sequence $(\alpha_{k})$ of points in $A$, and let $\varepsilon$ be any positive real number in $]0,1[$. By uniform continuity  of $f$, there exists a $\delta>0$ such that $|f(x)-f(y)|<\varepsilon$ whenever $|x-y|<\delta$ and $x, y \in{E}$. Since $(\alpha_{k})$ is a quasi-Cauchy sequence, there exists a positive integer $k_{0}$ such that $|\alpha_{k+1}-\alpha_{k}|<\varepsilon$ for $k\geq k_{0}$, therefore $f(\alpha_{k+1})-f(\alpha_{k})<\varepsilon$ for $k\geq k_{0}$.
Thus the number of indices $k\leq n$ that satisfy $f(\alpha_{k+1})-f(\alpha_{k}) \geq \varepsilon$ is less than or equal to $k_{0}$.
Hence $$\frac{1}{\rho_{n}} |\{k\leq n: f(\alpha_{k+1})-f(\alpha_{k}) \geq \varepsilon\}| \leq \frac{k_{0}}{\rho_{n}}.$$
It follows from this that
\[ \lim_{n\rightarrow\infty}\frac{1}{\rho_{n} }|\{k\leq n: f(\alpha_{k+1})-f(\alpha_{k}) \geq \varepsilon \}|=0.\]
Thus $(f(\alpha_{k}))$ is a $\rho$-statistically-downward quasi-Cauchy sequence. This completes the proof of the theorem.
\end{proof}

Now we have the following result related to uniform convergence, namely, the uniform limit of a sequence of $\rho$-statistically-downward continuous functions is  $\rho$-statistically-downward continuous.
\begin{Thm} \label{TheoremUniformlimitofdownwardcont} If $(f_{n})$ is a sequence of $\rho$-statistically-downward continuous functions defined on a subset $E$ of $\mathbb{R}$ and $(f_{n})$ is uniformly convergent to a function $f$, then $f$ is $\rho$-statistically-downward continuous on $E$.
\end{Thm}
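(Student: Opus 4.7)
The plan is to fix an arbitrary $\rho$-statistically-downward quasi-Cauchy sequence $(\alpha_k)$ in $E$ and show, for each $\varepsilon>0$, that the $\rho_n$-density of the set $\{k\leq n: f(\alpha_{k+1})-f(\alpha_k)\geq\varepsilon\}$ tends to $0$. The standard trick is to use uniform convergence to replace $f$ by some $f_N$ and thereby reduce the statement for $f$ to the hypothesis that each $f_n$ preserves $\rho$-statistically-downward quasi-Cauchy sequences.

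First I would fix $\varepsilon>0$ and use uniform convergence to pick $N$ such that $|f_N(x)-f(x)|<\varepsilon/3$ for every $x\in E$. The key algebraic step is the decomposition
\[
f_N(\alpha_{k+1})-f_N(\alpha_k)=\bigl[f(\alpha_{k+1})-f(\alpha_k)\bigr]+\bigl[f_N(\alpha_{k+1})-f(\alpha_{k+1})\bigr]+\bigl[f(\alpha_k)-f_N(\alpha_k)\bigr],
\]
from which, whenever $f(\alpha_{k+1})-f(\alpha_k)\geq\varepsilon$, one obtains $f_N(\alpha_{k+1})-f_N(\alpha_k)\geq\varepsilon-2\varepsilon/3=\varepsilon/3$. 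This yields the inclusion
\[
\{k\leq n: f(\alpha_{k+1})-f(\alpha_k)\geq\varepsilon\}\subseteq\{k\leq n: f_N(\alpha_{k+1})-f_N(\alpha_k)\geq\varepsilon/3\}.
\]

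Next, since $f_N$ is $\rho$-statistically-downward continuous and $(\alpha_k)\in \Delta\rho^{-}$, the sequence $(f_N(\alpha_k))$ is itself $\rho$-statistically-downward quasi-Cauchy, so the right-hand set has $\rho_n$-density tending to $0$. The above inclusion then forces
\[
\lim_{n\to\infty}\frac{1}{\rho_n}\bigl|\{k\leq n: f(\alpha_{k+1})-f(\alpha_k)\geq\varepsilon\}\bigr|=0,
\]
which is exactly what is needed.

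I do not expect a serious obstacle: the only delicate point is the one-sided triangle estimate above, since we are working with the asymmetric condition $\alpha_{k+1}-\alpha_k\geq\varepsilon$ rather than an absolute value, so one must be careful that the two error terms are bounded in absolute value (which they are, by the choice of $N$) and that the splitting is done with $\varepsilon/3$ rather than $\varepsilon/2$ so as to absorb both error terms. No use of $\Delta\rho_n=O(1)$ or of $\limsup\rho_n/n<\infty$ is needed; the argument depends only on the density framework and uniform convergence.
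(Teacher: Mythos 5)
Your proof is correct and follows essentially the same route as the paper's: both fix a $\rho$-statistically-downward quasi-Cauchy sequence $(\alpha_k)$, use uniform convergence to choose $N$ with $|f_N(x)-f(x)|<\varepsilon/3$ on $E$, and then transfer the vanishing $\rho_n$-density from $\{k\leq n: f_N(\alpha_{k+1})-f_N(\alpha_k)\geq \varepsilon/3\}$ to $\{k\leq n: f(\alpha_{k+1})-f(\alpha_k)\geq \varepsilon\}$ by an $\varepsilon/3$ splitting. The only cosmetic difference is that you absorb the two error terms directly into a single set inclusion, whereas the paper writes the splitting as a union of three sets, two of which are empty by the choice of $N$; your remark that the one-sided condition is handled because the error terms are controlled in absolute value is exactly the right point of care.
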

\begin{proof}
Let $\varepsilon$ be a positive real number and $(\alpha_{k})$ be any $\rho$-statistically-downward quasi-Cauchy sequence of points in $E$. By the uniform convergence of $(f_{n})$ there exists a positive integer $N$ such that $|f_{n}(x)-f(x)|<\frac{\varepsilon}{3}$ for all $x \in {E}$ whenever $n\geq N$. As $f_{N}$ is $\rho$-statistically-downward continuous on $E$, we have $$\lim_{n\rightarrow\infty} \frac{1}{\rho_{n}} |\{k\leq n: f_{N}(\alpha_{k+1})-f_{N}(\alpha_{k}) \geq \frac{\varepsilon}{3}\}|=0.$$ On the other hand, we have \\ $\{k\leq n: f(\alpha_{k+1})-f(\alpha_{k}) \geq \varepsilon\} \subseteq {\{k\leq n:  f(\alpha_{k+1})-f_{N}(\alpha_{k+1})\geq \frac{\varepsilon}{3}\}}$  \; \; \; \; \; \; \; \; \; \; \; $\cup \{k\leq n: f_{N}(\alpha_{k+1})-f_{N}(\alpha_{k}) \geq \frac{\varepsilon}{3}\} \cup \{k\leq n: f_{N}(\alpha_{k})-f(\alpha_{k}) \geq \frac{\varepsilon}{3}\}$
\\Hence \\ $\frac{1}{\rho_{n}} |\{k\leq n: f(\alpha_{k+1})-f(\alpha_{k}) \geq \varepsilon\}| \leq \frac{1}{\rho_{n}} |{k\leq n:  f(\alpha_{k+1})-f_{N}(\alpha_{k+1})\geq \frac{\varepsilon}{3}\}}|$  \; \; \; \; \; \; \; \; \; \; \; $+ \frac{1}{\rho_{n}} |\{k\leq n: f_{N}(\alpha_{k+1})-f_{N}(\alpha_{k}) \geq \frac{\varepsilon}{3}\}| + \frac{1}{\rho_{n}} |\{k\leq n:  f_{N}(\alpha_{k})-f(\alpha_{k}) \geq \frac{\varepsilon}{3}\}|$\\
Now it follows from this inequality that
$$lim_{n\rightarrow\infty} \frac{1}{\rho_{n}} |\{k\leq n: f(\alpha_{k+1})-f(\alpha_{k}) \geq \varepsilon\}|=0.$$
This completes the proof of the theorem.
\end{proof}

\section{Conclusion}

The results in this paper include not only the related theorems on statistical downward continuity studied in \cite{CakalliUpwardanddownwardstatisticalcontinuities} as special cases, i.e. $\rho_{n}=n$ for each $n\in{\mathbb{N}}$, but also include results which are also new for statistical downward continuity. In this paper, mainly, a new type of continuity, namely the concept of ${\rho}$-statistical downward continuity of a real function, and ${\rho}$-statistical downward compactness of a subset of $\mathbb{R}$ are introduced and investigated. In this investigation we have obtained theorems related to ${\rho}$-statistical downward continuity, and uniform continuity. We also introduced and studied some other continuities involving ${\rho}$-statistical downward  quasi-Cauchy sequences, and convergent sequences of points in $\mathbb{R}$. It turns out that the set of ${\rho}$-statistical downward continuous functions on a above bounded subset of $\mathbb{R}$ is contained in the set of uniformly continuous functions. We suggest to investigate ${\rho}$-statistical downward continuity  of fuzzy functions or soft functions (see \cite{CakalliandPratul}, and \cite{KocinacSelectionpropertiesinfuzzymetricspaces} for the definitions and related concepts in fuzzy setting, and see \cite{ArasandSonmezandCakalliOnSoftMappings}, \cite{ErdemandArasandCakalliandSonmezSoftmatricesonsoftmultisetsinanoptimaldecisionprocess} related concepts in soft setting). We also suggest to investigate ${\rho}$-statistical downward  continuity via double sequences (see for example \cite{KocinacDoubleSequencesandSelections},  \cite{CakalliandPattersonFunctionspreservingslowlyoscillatingdoublesequences}, \cite{CakalliandSavasStatisticalconvergenceofdoublesequences}, \cite{PattersonandCakalliQuasiCauchydoublesequences} and \cite{PattersonandSavasAsymptoticEquivalenceofDoubleSequences} for the definitions and related concepts in the double sequences case). For another further study, we suggest to investigate ${\rho}$-statistical downward continuity in an asymmetric cone metric space since in a cone metric space the notion of an ${\rho}$-statistical downward quasi Cauchy sequence coincides with the notion of an $\rho$-statistically quasi Cauchy sequence, and therefore ${\rho}$-statistical downward continuity coincides with $\rho$-statistically-ward continuity (see \cite{CakalliandSonmezandGenc}, \cite{CakalliandSonmezSlowlyOscillatingContinuityinAbstractMetricSpaces}, \cite{AkdumanandOzelandKilicmanSomeremarksoniwardcontinuity},  \cite{PalandSavasandCakalliIconvergenceonconemetricspaces}, \cite{SonmezandCakalliConenormedspacesandweightedmeans}).

\section{Acknowledgment}
We acknowledge that some of the results in this paper were presented at the International Conference on Recent Advances in Pure and Applied Mathematics (ICRAPAM 2017) May 11-15, 2017, Palm Wings Ephesus  Resort  Hotel, Kusadasi - Aydin, TURKEY.

\end{document}